\documentclass[amssymb,11pt]{article}
\usepackage{amsfonts}
\usepackage{amsthm}
\usepackage{amsmath}
\usepackage{graphicx}
\usepackage[psamsfonts]{amssymb}
\usepackage{amscd}

\sloppy

\title{The centralizer of a $C^1$ generic diffeomorphism is trivial}
\author{Christian Bonatti, Sylvain Crovisier and Amie Wilkinson}

\textwidth=14. true cm
\textheight=22. true cm
\voffset=-2. true cm
\hoffset = -.5 true cm

 \def\RR{{\mathbb R}}  
   
 \def\ZZ{{\mathbb Z}}

\def\cB{{\cal B}}    \def\cT{{\cal T}}
   \def\cO{{\cal O}} 
\def\cD{{\cal D}}    
  \def\cK{{\cal K}}  
   \def\cR{{\cal R}} \def\cX{{\cal X}}

\newtheorem{mainthm}{Theorem}

\newtheorem{ques}{Question}

\newtheorem{theorem}{Theorem}[section]
\newtheorem{theo}[theorem]{Theorem}
\newtheorem{prop}[theorem]{Proposition}
\newtheorem{proposition}[theorem]{Proposition}

\newtheorem{coro}[theorem]{Corollary}

\theoremstyle{remark}

\newtheorem*{remarkempty}{Remark}

\newtheorem*{thank}{Acknowledgement}

\def\dim{\hbox{dim} }
\def\interior{\hbox{Int} }

\def\Lip{\hbox{Lip} }

\def\Diff{\hbox{Diff} }
\def\diff{\hbox{Diff} }
\def\Det{\hbox{Det} }
\def\Per{\hbox{Per} }

\def\title{\em}

\def\M{\mathcal{M}}

\def\transverse{\,\raise2pt\hbox to1em{\hfil$\top$\hfil}\hskip -1em \hbox
to1em{\hfil$\cap$\hfil}\,}

\newlength{\figboxwidth} \setlength{\figboxwidth}{5.8in}

\begin{document}

\maketitle

\section*{Introduction}

In this announcement, we describe the solution in the $C^1$ topology to a question asked by S. Smale on the
genericity of trivial centralizers.  
The question is posed in the following context.
We fix a compact connected manifold $M$ and consider the space $\Diff^r(M)$ of  $C^r$ diffeomorphisms
of $M$, endowed with the $C^r$ topology.  The
{\em centralizer} of  $f\in \Diff^r(M)$ is defined
as $$Z^r(f):=\{g\in \Diff^r(M): fg=gf\}.$$
Clearly $Z^r(f)$ always contains the cyclic group
$<f>$ of all the powers of $f$.  We
say that $f$ has {\em trivial centralizer} if $Z^r(f) = <f>$.
Smale asked the following:

\begin{ques}[\cite{Sm1,Sm2}]\label{q.smale}
Consider the set of $C^r$ diffeomorphisms of a compact connected manifold $M$ with trivial centralizer.
\begin{enumerate}
\item Is this set dense in $\Diff^r(M)$?
\item Is it {\em residual} in $\Diff^r(M)$?  That is, does it contain a dense $G_\delta$ subset?
\item Does it contain an open and dense subset of $\Diff^r(M)$?
\end{enumerate}
\end{ques}

For the case $r=1$ we now have a complete answer to this question.

\begin{mainthm}[B-C-W]\label{t.main1}
For any compact connected manifold $M$, there is a residual subset of $\Diff^1(M)$ consisting
of diffeomorphisms with trivial centralizer.
\end{mainthm}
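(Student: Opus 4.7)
The plan is to build a residual $\cR\subset\Diff^1(M)$ on which $Z^1(f)=\langle f\rangle$. I would first aim for a local triviality statement: on a residual set, any $g\in Z^1(f)$ that is $C^1$-close to a power $f^k$ must actually equal $f^k$. This encodes discreteness of $Z^1(f)$, and repackages as a countable intersection of open dense sets, one for each basic $C^1$-neighborhood of each power $f^k$, using upper semicontinuity of the commutation relation under perturbation of $f$. Promoting local triviality to $Z^1(f)=\langle f\rangle$ then requires a global rigidity statement showing that any commuting $g$ lies near some $f^k$; this is the dynamical heart of the theorem.

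The principal dynamical input is the standard $C^1$-generic package: the Kupka-Smale theorem, Pugh's closing lemma, the Hayashi-Bonatti-Crovisier connecting lemmas, and Franks' lemma. Together these produce a residual subset on which periodic orbits are hyperbolic and dense in every chain recurrence class, and distinct saddles in a common class are homoclinically related. A further Franks-type perturbation can be used to ensure that the multiplier spectrum along periodic points distinguishes distinct $f$-orbits of the same period. Any $g\in Z^1(f)$ must then send each periodic orbit $\gamma$ of period $n$ to $f^{k_\gamma}(\gamma)$ for some $k_\gamma\in\{0,\dots,n-1\}$. Replacing $g$ by $gf^{-j}$ for a suitable $j$, one may assume that $g$ fixes a chosen orbit $\gamma_0$ pointwise; commutation with $f$, combined with generic constraints on the periodic multipliers, forces $Dg$ to be trivial along $\gamma_0$, and the stable manifold theorem extends $g=\id$ to $W^s(\gamma_0)\cup W^u(\gamma_0)$. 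Density of these invariant manifolds in the chain recurrence class of $\gamma_0$, together with the homoclinic relations provided by the connecting lemma, then propagates $g=\id$ throughout that class.

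The main obstacle I foresee is globalisation: assembling a single integer $k$ that matches $g$ to $f^k$ uniformly across every chain recurrence class. In the tame regime --- finitely many chain recurrence classes, each a hyperbolic basic set --- this is classical Axiom A theory, but in the generic wild regime the chain recurrence classes may be uncountable and finely nested, and one must rely on a Conley-theoretic filtration of $M$ together with a connecting lemma adapted to equating the integers $k_\gamma$ across classes. A subsidiary technical point is the orchestration of the Franks-type perturbations that separate periodic multipliers: these must be carried out in a way that preserves all previously imposed generic conditions, so that the countable intersection of the resulting refinements remains residual. I expect the globalisation to the wild case to constitute the core of the work.
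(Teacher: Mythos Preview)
Your proposal diverges from the paper's approach at a crucial step, and that step contains a genuine gap.

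The claim that ``the stable manifold theorem extends $g=\id$ to $W^s(\gamma_0)\cup W^u(\gamma_0)$'' is not available in $C^1$ regularity. What you are invoking is a Kopell--Sternberg type rigidity: if $g$ commutes with a hyperbolic germ and agrees with it to first order at the fixed point, then $g$ is the identity on the invariant manifolds. That statement requires $C^2$ (or at least $C^{1+\alpha}$) control, and it is false in $C^1$: there exist $C^1$ diffeomorphisms commuting with a linear contraction, fixing the origin with identity derivative, yet not equal to the identity. Relatedly, your preceding assertion that generic multiplier constraints force $Dg$ to be \emph{trivial} along $\gamma_0$ is too strong; commutation with a linear map having simple spectrum only forces $Dg(p)$ into the diagonal centralizer of $Df^n(p)$, not to the identity. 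So even the hypothesis of the Kopell-type step is not in place.

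This is exactly the obstacle the paper is built around. Instead of linearization rigidity, the paper introduces a quantitative $C^1$ substitute, the \emph{unbounded distortion} property (UD): along a dense set of orbits in the wandering set and in each stable manifold, the Jacobian determinant of $f^n$ separates distinct orbits by arbitrarily large amounts. Since a commuting Lipschitz $g$ can distort $\log|\det Df^n|$ by only a bounded amount, this forces $g$ to preserve those orbits, yielding $g=f^{\alpha}$ with $\alpha$ locally constant on an open dense set. The global step then uses a second perturbative property, \emph{large derivative} (LD), to bound $\alpha$. A further subtlety you do not address is that (LD) is only \emph{dense}, not generic, so one cannot simply intersect residual sets; the paper passes through the Lipschitz centralizer $Z^{\mathrm{Lip}}(f)$ and uses Arzel\`a--Ascoli compactness of $\Lip^K(M)$ together with upper semicontinuity of $f\mapsto Z^{\mathrm{Lip}}(f)\cap\Lip^K(M)$ to promote density to residuality. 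Your proposed route to residuality (open dense sets indexed by $C^1$-neighborhoods of powers $f^k$) does not obviously produce a $G_\delta$ and would need a comparable compactness ingredient. Finally, your outline is organised around chain recurrence classes and says nothing about the wandering set $M\setminus\Omega(f)$, where the paper has to do separate (and in fact harder) work to obtain (UD).
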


\begin{mainthm}[B-C-Vago-W]\label{t.main2}
For any compact manifold $M$, the set of $C^1$ diffeomorphisms
with trivial centralizer does not contain any open and dense subset.
\end{mainthm}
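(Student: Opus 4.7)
The statement is equivalent to the existence of a nonempty open set $V \subset \Diff^1(M)$ in which diffeomorphisms with \emph{non-trivial} centralizer are $C^1$-dense: if $T = \{f : Z^1(f) = \langle f \rangle\}$ contained an open dense subset $\cO$, its complement $T^c$ would be nowhere dense, contradicting density of $T^c$ in any nonempty open set. The plan is to exhibit such a $V$ together with a $C^1$-dense construction within $V$ of pairs $(f_1, g)$ satisfying $f_1 g = g f_1$ and $g \notin \langle f_1 \rangle$.

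I would take $V$ to be the open set of diffeomorphisms possessing a hyperbolic periodic sink of some period $k$; this set is nonempty on any compact connected $M$ (a sink can be created by a small $C^1$-perturbation supported in a ball) and open because hyperbolic periodic orbits persist under $C^1$-perturbations. Given $f \in V$ and a $C^1$-neighborhood $\cU$ of $f$, the strategy is to perturb $f$ inside $\cU$ so that the local dynamics near the sink admits an explicit symmetry, then bootstrap this symmetry into a global diffeomorphism commuting with the perturbed map.

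The construction proceeds in three steps. First, by a $C^1$-small perturbation in the spirit of Franks's lemma, arrange that the return map $f_1^k$ at the sink $p$ has derivative $Df_1^k(p)$ of a convenient normal form, for instance a conformal contraction $\lambda \cdot \mathrm{Id}$; by a further small perturbation one may assume $f_1^k$ coincides with this linear map in a small linearizing chart $\varphi\colon B \to B' \subset \RR^d$ around $p$. Second, choose a non-trivial compactly supported diffeomorphism $h\colon B' \to B'$ commuting with $\lambda \cdot \mathrm{Id}$ near $0$ — for example, a rotation interpolated to the identity near $\partial B'$ via a bump function — and transport it through $\varphi$ to a local diffeomorphism $g_0$ on $B$ commuting with $f_1^k$ where the normal form is exact. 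Third, propagate $g_0$ along the $f_1$-orbit of $B$ by setting $g|_{f_1^j(B)} = f_1^j g_0 f_1^{-j}$ for $0 \le j < k$ and $g = \mathrm{id}$ outside this orbit; the resulting $g$ commutes with $f_1$ wherever $g_0$ commutes with $f_1^k$.

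The main obstacle is that the interpolation to the identity in step two necessarily breaks exact commutation with $\lambda \cdot \mathrm{Id}$ on the transition annulus, producing a small error $f_1 g - g f_1$ supported there. The fix I envisage is an additional $C^1$-small perturbation of $f_1$ on this annulus, killing the error and yielding the final $f_1$ and $g$ with exact commutation. Secondary issues are the uniform $C^1$-control of conjugates of $g_0$ by iterates of $f_1^k$ near $p$ (enabled by the conformality of $f_1^k$ in the chart, so that conjugation by $(f_1^k)^n$ acts isometrically on derivatives) and the $C^1$-density within $V$ of the normal-form assumption, which may require refining $V$ to the open subset of sinks whose eigenvalues of $Df^k(p)$ are close to one another. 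Non-triviality $g \notin \langle f_1 \rangle$ is then immediate, since $g$ is supported in a small neighborhood of the sink orbit while $f_1^n$ for $n \ne 0$ moves most points of $M$; hence $Z^1(f_1) \supsetneq \langle f_1 \rangle$, establishing density of non-trivial centralizers in $V$.
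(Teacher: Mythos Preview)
Your construction has a genuine dynamical obstruction that no amount of perturbation on the transition annulus can remove. Suppose $g$ commutes with $f_1$ and $g=\id$ outside the compact set $U=\bigcup_{j=0}^{k-1} f_1^j(B)$, where $B$ is a small ball around the hyperbolic sink $p$. Take any $x\in U$ not on the sink orbit. Since $p$ is a hyperbolic attractor for $f_1$, it is a hyperbolic repeller for $f_1^{-1}$, so the backward orbit $f_1^{-n}(x)$ eventually leaves $U$: concretely, every point of $B\setminus\{p\}$ has a backward $f_1^k$-iterate in the fundamental annulus $B\setminus f_1^k(B)$, and one further backward $f_1^k$-iterate of such a point lies outside $U$ (the $f_1^j(B)$ being pairwise disjoint for $0\le j<k$). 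For that $n$ we have $g(f_1^{-n}(x))=f_1^{-n}(x)$, and commutation gives $g(x)=f_1^{n}g(f_1^{-n}(x))=x$. Hence $g=\id$ on all of $M$. This is exactly the rigorous version of the difficulty you flag: if $g_0$ is the identity near $\partial B'$ and commutes with $\lambda\cdot\mathrm{Id}$ on $B'$, then $g_0\equiv\id$; perturbing $f_1$ on the annulus does not help, because as long as the perturbation is $C^1$-small the sink $p$ remains hyperbolic and the escape argument above still applies verbatim.

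This is why the paper's construction (carried out in the companion paper~\cite{BCVW}) proceeds very differently and, in particular, never uses a hyperbolic sink as the seat of the symmetry. In dimension at most two the examples are time-$1$ maps of flows, so the commuting $g$ is a time-$t$ map and is \emph{global}, not compactly supported. In dimension at least three the examples are built to have an \emph{open set of periodic points} of some period $k$: on such a set $f^k=\id$, backward orbits do not escape, and a compactly supported nontrivial $g$ commuting with $f$ can be manufactured exactly as you attempt. The moral is that the centralizer-enlarging mechanism must be supported on a region with genuinely non-hyperbolic (indeed, non-wandering and highly degenerate) dynamics; your choice of $V$ as diffeomorphisms with a hyperbolic sink is the one place where the strategy cannot succeed.
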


Theorem~\ref{t.main1} gives an affirmative answer to the second (and
hence the first) part of Question~\ref{q.smale}: our aim in this text is
to present the structure of its proof that will be detailed in~\cite{BCW2}.
Theorem~\ref{t.main2} gives a negative answer to the third part of
Questions~\ref{q.smale}: with G. Vago we prove in~\cite{BCVW} that there exists
a family of $C^\infty$ diffeomorphisms with large centralizer that is $C^1$ dense in a
nonempty open subset of $\Diff^1(M)$. For these examples, one has to consider separately
the case of the circle, the surfaces and manifolds of dimension greater or equal to $3$:
in dimension less or equal to two such a diffeomorphism appears as the time-1 map of a flow,
whereas in higher dimension each example we build possesses an open set of periodic points.

These results suggest that the topology of the set of diffeomorphisms with trivial centralizer is complicated and motivate
the following questions.
\begin{ques}
\begin{enumerate}
\item {\em Consider the set of diffeomorphisms whose centralizer is trivial.}\\
What is its interior?
\item Is it a Borel set?\\
{\em (See, \cite{FRW} for a negative answer to this question in the measurable context.)}
\item {\em The set $\{(f,g) \in \Diff^1(M)\times \Diff^1(M): fg=gf\}$ is closed}.\\
What is its local topology? For example, is
it locally connected?
\end{enumerate}
\end{ques}

Our motivation for considering Question~\ref{q.smale} comes from at
least two sources. First, the study of $C^1$-generic diffeomorphisms
has seen substantial progress in the last decade, and
Question~\ref{q.smale} is an elementary test question for the
existing techniques.  More intrinsically, there are several
classical motivations for Question~\ref{q.smale}.  In physics (for
example, in Hamiltonian mechanics), one searches for symmetries of a
given system in order to reduce the complexity of the orbit space.
The groups of such symmetries is precisely the centralizer.   In a
similarly general vein, a central theme in dynamics is to understand
the conjugacy classes inside of $\Diff^r(M)$; that is, to find the
orbits of the action of $\Diff^r(M)$ on itself by conjugacy.
Theorem~\ref{t.main1} implies that the stabilizer in this action of
a generic element is trivial.

Knowing the centralizer of a diffeomorphism gives answers to more
concrete questions as well, such as the embeddability of a
diffeomorphism in a flow and the existence of roots of a
diffeomorphism. The study of diffeomorphisms and flows are closely
related, and indeed, every diffeomorphism appears as the return map
of a smooth flow to a cross-section, and the time-1 map of a flow is
a diffeomorphism.  These two studies have many differences as well,
and it is natural to ask when a given diffeomorphism can be embedded
as the time-1 map of a flow ( the centralizer of such a
diffeomorphism must contain either $\RR$ or the circle $\RR/\ZZ$). A
weaker question is to ask whether a diffeomorphism $f$ admits a
root; that is, if one can write $f=g^k$, for some integer $k>1$.  If
$f$ admits such a root, then its centralizer is not trivial,
although it might still be discrete.

Question~\ref{q.smale} can also be viewed as a problem
about the group structure of $\Diff^1(M)$, from a generic vantage point. 
An easy transversality argument (written in~\cite[Proposition 4.5]{ghys} for circle homeomorphisms)
allows to describe the group generated by a generic family of diffeomorphisms:
\emph{for a generic $(f_1, \ldots, f_p) \in \left(\Diff^r(M)\right)^p$ with $p\geq 2$ and $r\geq 0$,
the group $<f_1, \ldots, f_p>$ is free.} Restated in these terms,
Theorem~\ref{t.main1} says that for a generic $f$, if $G$ is any abelian subgroup of
$\Diff^1(M)$ containing $f$, then $G = <f>$.  The same conclusion holds if $G$ is assumed to be nilpotent, for then
the center of $G$, and thus $G$ itself, must equal $<f>$.  One can ask whether the same conclusions hold
for other properties of $G$, such as solvability.
Question~\ref{q.smale} could be generalized in the following way.
\begin{ques}
Fix a reduced word $w(f,g_1,\ldots, g_k)$ in $\Diff^1(M)$.  How small can the set $\{{\bf g}\in \left(\Diff^1(M)\right)^k: w(f,{\bf g}) = id\}$
be for the generic $f\in \Diff^1(M)$?
\end{ques}

The history of Question~\ref{q.smale}  goes back to the work of N. Kopell \cite{Ko}, who
gave a complete answer for $r\geq 2$ and the circle $M=S^1$: the
set of diffeomorphisms with trivial centralizer contains an open and dense subset of $\Diff^r(S^1)$.
For $r\geq 2$ on higher dimensional manifolds, there are partial results with additional
dynamical assumptions, such as hyperbolicity \cite{PY1, PY2} and partial hyperbolicity \cite{Bu}.
In the $C^1$ setting, Togawa proved that generic Axiom A diffeomorphisms have trivial centralizer.  In an earlier work
\cite{BCW}, we showed that for $\dim(M)\geq 2$,  the $C^1$ generic conservative (volume-preserving or symplectic)
diffeomorphism has trivial centralizer in $\Diff^1(M)$.  A more precise list of previous results can be found
in \cite{BCW}.

The rest of the paper describes some of the main novelties in the proof of Theorem~\ref{t.main1} and the structure
of its proof.

\subsection*{Local and global: the structure of the proof of Theorem~\ref{t.main1}}
The proof of Theorem~\ref{t.main1} breaks into two parts, a
``local'' one and a ``global'' one. The local part proves that for
the generic $f$,  if $g$ commutes with $f$, then $g=f^\alpha$ on an
open and dense subset  $W\subset M$, where $\alpha\colon W\to \ZZ$
is a locally constant function. The global part consists in proving
that for generic $f$, $\alpha$ is constant. This is also the general
structure of the proofs of the main results in \cite{Ko, PY1, PY2,
To1, To2, Bu2}.  In contrast, in the context of the $C^1$ flow
embedding problem studied by J. Palis \cite{P}, there are local
obstructions, like the existence of transverse heteroclinic orbits,
which prevent a diffeomorphism from being embedded in a flow.

\renewcommand{\thesubsubsection}{\alph{subsubsection})}
\subsubsection{The local strategy}

In describing the local strategy, let us first make a very rough analogy with the symmetries of a
Riemanniann manifold. If you want to prevent a Riemanniann metric from having global isometries,
it is enough to perturb the metric in order to get a point which is locally isometric to no others,
and which does not admit any local isometries. Hence the answer to the global problem is indeed
given by a purely local perturbation, and the same happens
for the flow embedding problem: if a diffeomorphism $f$ does not agree in some place with
the time-1 map of a flow, then neither does the global diffeomorphism.

The situation
of the centralizer problem is quite different: the centralizer of $f$ may be locally trivial at
some place, but $f$ may still admit a large centralizer supported in another place.
Coming back to our analogy with isometries, our strategy consists in producing local perturbations covering a open and dense
subset of orbits, avoiding non-trivial local symmetries on that set. This step consists in ``individualizing'' a dense collection
of orbits,  arranging that the behavior of the diffeomorphism in a neighborhood of one orbit is different from the behavior in a neighborhood
of any other. Hence any commuting diffeomorphism must preserve each of these orbits.

This individualization of orbits happens whenever a property of
unbounded distortion (UD) holds between certain orbits of $f$, a
property which we describe precisely in the next section. In the
first step of our proof we show that the (UD) property holds for a
residual set of $f$.  This gives local rigidity of the centralizer
of a generic $f$, which gives the locally constant function
$\alpha$.

\subsubsection{The global strategy}

The global strategy goes like this. Assuming that we already proved the first step, we have that any diffeomorphism $g$ commuting with the
generic $f$ is on the form $g=f^\alpha$ where $\alpha$ is locally constant and defined on a dense open subset. Furthermore,
$\alpha$ is uniquely defined on the non-periodic points for $f$.
Assuming that the periodic points of $f$ are isolated, it is now enough to verify that
the function $\alpha$ is bounded. This would be the case if the derivative $Df^n$ takes
large values on each orbit of $f$, {\em for each large $n$}: the bound on $Dg$ would
then forbid $\alpha$ from taking arbitrarily large values.
Notice that this property is global in nature: we require large derivative of $f^n$ on each orbit, for each large $n$.

Because it holds for every orbit (not just a dense set of orbits)
and every large $n$, this large derivative (LD) property is not
generic, although we prove that it is dense. This lack of genericity
affects the structure of our proof: it is not possible to obtain
both (UD) and (LD) properties just by intersecting two residual
sets.  There are two more steps in the argument.  First, we show
that {\em among the diffeomorphisms satisfying (UD)}, the property
(LD) is dense.  This allows us to conclude that the set of
diffeomorphisms with trivial centralizer is $C^1$-dense, answering
the first part of Question~\ref{q.smale}.

\subsubsection{From dense to residual}
At this point in the proof, we have obtained a $C^1$-dense set of
diffeomorphisms with trivial centralizer. There is some subtlety in
how we obtain a residual subset from a dense subset. An obvious way
to do this would be to prove that the set of diffeomorphisms with
trivial centralizer form a $G_\delta$, i.e., a countable
intersection of open sets. It is not however clear from the
definition that this set is even a Borel set, let alone a
$G_\delta$.  Instead we use a semicontinuity argument. To make this
argument work, we must consider centralizers defined inside of a
larger space of homeomorphisms, the bi-Lipschitz homeomorphisms. The
compactness of the space of bi-Lipschitz homeomorphisms with bounded
norm is used in a crucial way.  The details are described below. The
conclusion is that if a $C^1$-dense set of diffeomorphisms has
trivial centralizer inside of the space of bi-Lipschitz
homeomorphisms, then this property holds on a $C^1$ residual set.

\section{Background on $C^1$-generic dynamics}\label{s.preliminaries}
The space $\Diff^1(M)$ is a Baire space in the $C^1$ topology. A
{\em residual} subset of a Baire space is one that contains a
countable intersection of open-dense sets; the Baire category
theorem implies that a residual set is dense.  We say that a
property holds for the {\em $C^1$-generic diffeomorphism} if it holds
on a residual subset of $\Diff^1(M)$.

For example, the Kupka-Smale Theorem asserts (in part)
that for a $C^1$-generic diffeomorphism $f$, the periodic orbits of 
$f$ are all hyperbolic. It is easy to verify that, furthermore, 
the $C^1$-generic diffeomorphism $f$ has the following property:
if $x,y$ are periodic points of $f$ with period
$m$ and $n$ respectively, and if their orbits are distinct, 
then the set of eigenvalues of $ Df^m(x)$ and of $Df^n(y)$ are disjoint. 
If this property holds, we
say that the \emph{periodic orbits of $f$ have distinct
eigenvalues}.


The nonwandering set  $\Omega(f)$ is the set of all points $x$
such that every neighborhood $U$ of $x$ meets some iterate of $U$:
$$U\cap \bigcup_{k>0} f^k(U) \ne \emptyset.$$
The elements of $\Omega(f)$ are called {\em nonwandering points}. By
the canonical nature of its construction, the compact set
$\Omega(f)$ is preserved by any homeomorphism $g$ that commutes with
$f$.

%
%

In \cite{BC} it is shown that for a $C^1$-generic diffeomorphism $f$, each
connected component $O$ of the interior of $\Omega(f)$ is contained
in the closure of the stable manifold of a periodic point $p\in O$.
Conceptually, this result means that for $C^1$ generic $f$, the
interior of $\Omega(f)$ and the {\em wandering set} $\M\setminus \Omega(f)$
share certain nonrecurrent features, as we now explain.

While points in the interior of $\Omega(f)$ all have nonwandering dynamics,
if one instead considers the restriction of $f$ to a stable manifold of a periodic orbit
$W^s(p)\setminus \cO(p)$, the dynamics are no longer recurrent; in the
induced topology on the submanifold $W^s(p)\setminus \cO(p)$, every point
has a {\em wandering neighborhood} $V$ whose iterates are all disjoint from $V$.
Furthermore, the sufficiently large future iterates of such a 
wandering neighborhood are contained in a neighborhood of a periodic orbit.
While the forward dynamics
on the wandering set are not similarly ``localized'' as they are on a stable manifold,
they still share this first feature: on the wandering set, 
every point has a wandering neighborhood (this time
the neighborhood is in the topology on $M$).

Thus, the results in \cite{BC} imply
that for the $C^1$ generic $f$, we have the following picture: 
there is an $f$-invariant open and dense subset $W$ of $M$, consisting 
of the union of the interior of $\Omega(f)$ and the complement 
of $\Omega(f)$, and densely in $W$ the dynamics of $f$ 
can be decomposed into components with ``wandering strata.''  
We exploit this fact
in our local strategy, outlined in the next section.


\section{Conditions for the local strategy: the unbounded distortion (UD) properties}

In the local strategy, we control the dynamics of the $C^1$ generic $f$ on the
open and dense set $W=\interior(\Omega(f))\cup \left(M\setminus\Omega(f)\right)$.
We describe here the main analytic properties we use to
control these dynamics.

We say that diffeomorphism $f$ satisfies the \emph{unbounded distortion
property  on the wandering set (UD$^{M\setminus\Omega}$)} if there
exists a dense subset $\cX\subset M\setminus \Omega(f)$ such that,
for any $K>0$, any $x\in \cX$ and any $y\in M\setminus \Omega(f)$ 
not in the orbit of $x$,  there exists $n\geq 1$ such that:
$$|\log |\det Df^n(x)|-\log|\det Df^n(y)||>K.$$

A diffeomorphism $f$ satisfies the \emph{unbounded distortion
property  on the stable manifolds(UD$^s$)} if for any hyperbolic
periodic orbit $\cO$, there exists a dense subset $\cX\subset
W^s(\cO)$ such that, for any $K>0$, any $x\in \cX$ and any $y\in W^s(\cO)$
not in the orbit of $x$, there exists $n\geq 1$ such that:
$$|\log |\Det Df_{|W^s(\cO)}^n(x)|-\log|\Det Df_{|W^s(\cO)}^n(y)||>K.$$

Our first main perturbation result in \cite{BCW2} is:
\begin{theo}[Unbounded distortion]\label{t.UD}
The diffeomorphisms in a residual subset of $\diff^ 1(M)$ satisfy
the (UD$^{M\setminus\Omega}$) and  the (UD$^s$) properties.
\end{theo}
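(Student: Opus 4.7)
The plan is a Baire category argument. Both (UD$^{M\setminus\Omega}$) and (UD$^s$) would be expressed as consequences of $f$ belonging to a countable intersection of $C^1$-open dense subsets of $\Diff^1(M)$; since residual sets are closed under countable intersection, it suffices to treat them separately. I would focus on (UD$^{M\setminus\Omega}$); (UD$^s$) is handled analogously inside the stable manifold of each hyperbolic periodic orbit (countably many, catalogued via Kupka-Smale).

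Fix a countable basis $\{U_i\}$ for $M$. For each pair $(i,j)$ with $\overline{U_i}\cap\overline{U_j}=\emptyset$, each integer $K\ge 1$, and each basic open rectangle $V\subset U_i\times U_j$, I would define $\cG_{i,j,K,V}$ to be the set of $f$ for which there exist $(x,y)\in V$ and $n\ge 1$ satisfying
$$\bigl|\log|\det Df^n(x)|-\log|\det Df^n(y)|\bigr|>K.$$
This set is visibly $C^1$-open. The technical heart of the proof is to show each $\cG_{i,j,K,V}$ is $C^1$-dense. After a preliminary Kupka-Smale reduction ensuring hyperbolic periodic orbits with distinct eigenvalues, the interesting case is when $V$ meets the wandering set.

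For density, pick wandering $x\in U_i$ and $y\in U_j$ on disjoint orbits. Because $x$ is wandering, its forward orbit visits a sequence of pairwise disjoint wandering neighborhoods. For sufficiently large $N$, choose a small ball $B$ around $f^N(x)$ whose full $f$-orbit is disjoint from the full $f$-orbit of $y$ and from $f^k(x)$ for $k\ne N$. A $C^1$-small bump supported in $B$ adds a fixed additive constant to $\log|\det Df^n(x)|$ for every $n>N$ while leaving $\log|\det Df^n(y)|$ unchanged. Superposing such bumps at many disjoint iterates $f^{N_1}(x),f^{N_2}(x),\ldots$ yields an arbitrarily large cumulative shift, while the mutual disjointness of supports keeps the total $C^1$-norm of the perturbation small. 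The perturbed diffeomorphism lies in $\cG_{i,j,K,V}$.

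Let $\cR$ denote the intersection of all $\cG_{i,j,K,V}$ together with the Kupka-Smale residual set and the residual set of~\cite{BC}. For $f\in\cR$ and each $i$, the sets $E^{(f)}_{i,j,K}:=\{(x,y)\in U_i\times U_j:\exists n,\ |\log|\det Df^n(x)|-\log|\det Df^n(y)||>K\}$ are dense open in $U_i\times U_j$; a Kuratowski-Ulam argument produces a dense subset of $x\in U_i\cap(M\setminus\Omega(f))$ with distortion exceeding $K$ against a dense subset of $y\in U_j\cap(M\setminus\Omega(f))$. Upgrading from ``densely many $y$'' to ``all $y$'' --- the actual statement of (UD$^{M\setminus\Omega}$) --- is the main obstacle, and requires refining the perturbation so that $\log|\det Df^n(x)|$ grows along $\cO(x)$ at a rate distinct from every other orbit, forcing the difference to be unbounded for every $y$. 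The wandering-strata decomposition recalled in Section~\ref{s.preliminaries} provides the geometric room for the coordinated perturbations along countably many orbits needed to achieve this. For (UD$^s$), a parallel perturbation is carried out inside $W^s(\cO)$, with the additional care that a $C^1$-small perturbation of $f$ on $M$ must induce the prescribed change of $\det Df_{|W^s(\cO)}$ while preserving the local stable manifold structure of $\cO$.
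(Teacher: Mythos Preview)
Your Baire--category scaffold is natural, but the proposal contains a genuine gap exactly where you flag ``the main obstacle.'' Your sets $\cG_{i,j,K,V}$ record only the existence of \emph{some} pair $(x,y)\in V$ with distortion exceeding $K$; intersecting these and invoking Kuratowski--Ulam yields at best, for each $x$ in a residual set, a \emph{residual} set of good $y$. The (UD$^{M\setminus\Omega}$) property demands unbounded distortion against \emph{every} $y\notin\cO(x)$, and this universal quantifier is what makes the theorem hard. Your suggested fix---perturb so that $\log|\det Df^n(x)|$ grows ``at a rate distinct from every other orbit''---is not a proof: a bump at $f^N(x)$ shifts the cumulative log-Jacobian along $\cO(x)$, but any orbit $\cO(y')$ passing near that bump is shifted too, and there is no a priori reason the perturbed growth along $\cO(x)$ differs from the growth along \emph{all} other wandering orbits. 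The paper singles out precisely this point, noting that in the wandering set ``the orbits of distinct points can be completely unrelated after sufficiently many iterates,'' so one cannot compare against a common linear model as one does on a stable manifold. The actual perturbation tool (sketched in Section~\ref{s.conclusion} and carried out in~\cite{BCW2}) modifies the derivative in a neighborhood so small that $f$ is effectively linear on many iterates of it, and then organizes these local perturbations so that the cumulative Jacobian along a dense family of wandering orbits is forced to diverge from \emph{every} other orbit---this is substantially more delicate than superposing independent bumps.

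You also have the relative difficulty of the two halves reversed. The paper states that (UD$^s$) is the easier property: it follows from a variation of Togawa's argument~\cite{To1,To2}, exploiting that all points of $W^s(\cO)$ share the same asymptotic linear dynamics near $\cO$, so the comparison of Jacobians is governed by a single hyperbolic germ. Your remark that (UD$^s$) requires ``additional care'' to preserve the local stable manifold under perturbation misreads where the work lies; no perturbation is needed for (UD$^s$) beyond the generic non-resonance of eigenvalues already furnished by Kupka--Smale. The genuinely new perturbation argument is for (UD$^{M\setminus\Omega}$), and your proposal does not supply it.
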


A variation of an argument due to Togawa \cite{To1,To2} detailed in 
\cite{BCW} shows the (UD$^s$) property holds for a $C^1$-generic diffeomorphism. 
To prove Theorem~\ref{t.UD}, we
are thus left to prove that the (UD$^{M\setminus\Omega}$) property holds
for a $C^1$-generic diffeomorphism.  This property is significantly
more difficult to establish $C^1$-generically than the (UD$^s$) property.
The reason is that points on the stable manifold of a periodic
point all have the same future dynamics, and these dynamics are
``constant'' for all large iterates: in a neighborhood of the periodic
orbit, the dynamics of $f$ are effectively linear.  In the wandering
set, by contrast, the orbits of distinct points can be completely unrelated
after sufficiently many iterates.

Nonetheless, the proofs that the (UD$^{M\setminus\Omega}$) and (UD$^s$) properties
are $C^1$ residual share some essential features, and both rely on the essentially non-recurrent
aspects of the dynamics on both the wandering set and the stable manifolds.

\section{Condition for the global strategy: the large derivative (LD) property}

Here we describe the analytic condition on the $C^1$-generic $f$ we
use to extend the local conclusion on the centralizer of $f$ to a global conclusion.

A diffeomorphism $f$ satisfies the \emph{large derivative property (LD) on a set $X$}
if, for any $K>0$, there exists $n(K)\geq 1$ such that
for any $x\in X$ and $n\geq n(K)$, there exists
$j\in \ZZ$ such that:
$$\sup\{\|Df^n(f^j(x))\|,\|Df^{-n}(f^{j+n}(x))\|\}>K.$$
Rephrased informally, the (LD) property on $X$ means that 
the derivative $Df^n$ ``tends to $\infty$'' {\em uniformly} on 
all orbits passing through $X$.  
We emphasize that the large derivative property is a property of the
{\em orbits} of points in $X$, and if it holds for $X$, it also
holds for all iterates of $X$.

The second main perturbation result in \cite{BCW2} is:
\begin{theo}[Large derivative]\label{t.LD}
Let $f$ be a diffeomorphism whose periodic orbits are hyperbolic.
Then, there exists a diffeomorphism $g$ arbitrarily close to $f$ in $\diff^1(M)$
such that the property (LD) is satisfied on $M\setminus \Per(f)$.

Moreover,
\begin{itemize}
\item $f$ and $g$ are conjugate via a homeomorphism $\Phi$, i.e. $g=\Phi f\Phi^{-1}$;
\item for any periodic orbit $\cO$ of $f$, the derivatives of $f$ on $\cO$
and of $g$ on $\Phi(\cO)$ are conjugate (in particular the periodic orbits of $g$ are hyperbolic);
\item if $f$ satisfies the (UD$^{M\setminus\Omega}$) property, then so does $g$;
\item if $f$ satisfies the (UD$^s$) property, then so does $g$.
\bigskip\\
\end{itemize}
\end{theo}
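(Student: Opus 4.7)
The plan is to construct $g$ as a topological conjugate of $f$, $g = \Phi f \Phi^{-1}$, where $\Phi$ is a homeomorphism of $M$ that equals the identity near each periodic orbit and is a smooth diffeomorphism on $M\setminus\Per(f)$, but whose derivative becomes unbounded along the wandering dynamics. Producing $g$ as such a topological conjugate immediately yields the first two bullets of the moreover clause: the topological dynamics and hence the hyperbolic periodic orbits, together with their derivatives, are automatically preserved because $\Phi=\id$ there. The (UD$^{M\setminus\Omega}$) and (UD$^s$) properties transfer from $f$ to $g$ by a bounded-distortion computation: conjugation by a piecewise smooth $\Phi$ changes $\log|\det Df^n|$ and $\log|\Det Df^n_{|W^s(\cO)}|$ by an additive term uniformly bounded in $n$, so the ``log-determinant separation'' in the UD definitions persists up to bounded error. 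The substance of the proof is to design $\Phi$ so that $g$ enjoys the (LD) property on $M\setminus\Per(f)$ while staying $C^1$-close to $f$.

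\emph{Local model on one orbit.} Consider a non-periodic orbit $\cO=\{f^k(x_0)\}_{k\in\ZZ}$ together with pairwise disjoint open neighborhoods $U_k\ni f^k(x_0)$ (a tower of fundamental domains). On each $U_k$ pick a smooth diffeomorphism $\phi_k:U_k\to U_k$ equal to the identity near $\partial U_k$ with $\|D\phi_k\|_{C^0}=C_k$, a prescribed slowly growing sequence. Set $\Phi=\phi_k$ on $U_k$ and $\Phi=\id$ off $\bigcup_k U_k$. Then $g|_{U_k}=\phi_{k+1}\circ f\circ \phi_k^{-1}$ and by telescoping,
\[
g^n|_{U_k} \;=\; \phi_{k+n}\circ f^n\circ \phi_k^{-1}.
\]
So along the orbit of $x_0$, $\|Dg^n\|$ absorbs a factor of order $C_n$ relative to $\|Df^n\|$. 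Choosing $C_{k+1}/C_k = 1+\varepsilon_k$ with $\varepsilon_k\to 0$ but $\sum\varepsilon_k=\infty$, consecutive $\phi_k$'s become $C^1$-close (keeping $g$ $C^1$-close to $f$), while $C_k\to\infty$ forces (LD) along $\cO$.

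\emph{Global patching.} The set $M\setminus\Per(f)$ decomposes into: (i) wandering orbits, to which the local model applies directly via fundamental-domain towers; (ii) orbits on stable or unstable manifolds of hyperbolic periodic orbits, where (LD) is automatic because the spectrum of $Df^n$ near a hyperbolic periodic point is exponentially large in $n$; and, when $f$ is non-generic, (iii) other recurrent non-periodic orbits, handled by local models on small transverse sections through which the orbit passes infinitely often. Running countably many instances of the local construction simultaneously, with pairwise disjoint supports and a single common growth sequence $(C_k)$, produces a threshold $n(K)$ depending only on $K$, valid on all of $M\setminus\Per(f)$.

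\emph{Main obstacle.} The hardest step is producing a single threshold $n(K)$ that works uniformly on \emph{every} non-periodic orbit while keeping $g$ $C^1$-close to $f$: the countably many fundamental-domain towers must be organized so that one growth sequence $(C_k)$ serves them all simultaneously, and the supports must shrink appropriately so that $\Phi$ extends continuously across accumulation points, in particular across $\Per(f)$. A secondary difficulty is case (iii), where no obvious fundamental domain is available and one must exploit repeated passages through small cross-sections combined with the hyperbolicity of nearby periodic orbits. Once $\Phi$ is constructed, the remaining items of the moreover clause follow by inspection and the bounded-distortion estimate above.
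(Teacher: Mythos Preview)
The paper is an announcement and does not give a full proof; its outline (Section~6) says the argument combines a local perturbation tool that modifies $Df$ in tiny neighborhoods with the \emph{topological towers} of \cite{BC}: finite towers of large height whose levels meet \emph{every} orbit of sufficient length and have long return time. Your proposal is organized differently --- you try to realize $g$ globally as $\Phi f \Phi^{-1}$ for a single homeomorphism $\Phi$ with unbounded derivative --- and the difference is not cosmetic: it is exactly at the uniformity/global step that your argument has a genuine gap.

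Your ``global patching'' does not produce a single $n(K)$ valid on all of $M\setminus\Per(f)$. In case~(i) you invoke ``fundamental-domain towers'' for the wandering set, but the wandering set of a general $f$ has no global fundamental domain carrying infinitely many disjoint iterates on which a common growth sequence $(C_k)$ can be imposed; the iterates $f^k(U_0)$ accumulate in $\Omega(f)$ and get distorted without bound, and countably many such towers need not meet every wandering orbit. Case~(ii) is correct for a fixed hyperbolic orbit $\cO$ (push $f^j(x)$ near $\cO$), but gives no uniform $n(K)$ when there are infinitely many periodic orbits whose eigenvalues may approach the unit circle. Case~(iii) --- recurrent non-periodic orbits not on any stable or unstable manifold --- is precisely where the \cite{BC} towers are indispensable: ``small transverse sections through which the orbit passes infinitely often'' is not a substitute, because nearby passages can be arbitrarily close together in time, and you cannot build a consistent $\Phi$ whose derivative increases along each visit while keeping $g$ $C^1$-close to $f$. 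The paper's mechanism is finite: one tower of height $N$, one perturbation spread over the $N$ levels, guaranteeing large derivative for all $n\geq N$ on every orbit that enters the tower; this is what delivers uniformity.

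There is also a slip in your transfer of (UD$^{M\setminus\Omega}$): when $D\Phi$ is unbounded, the correction term $\log|\det D\Phi(f^n x)|-\log|\det D\Phi(x)|$ is \emph{not} uniformly bounded in $n$ for wandering $x$ whose forward orbit keeps entering the region where $\|D\Phi\|$ is large. Your argument does work for (UD$^s$), since $f^n x\to\cO$ and $\Phi=\id$ there, and it could be repaired for (UD$^{M\setminus\Omega}$) by taking each $\phi_k$ volume-preserving; but as written the ``bounded additive term'' claim is false. Finally, if $\Per(f)$ is dense in some basic set, ``$\Phi=\id$ near each periodic orbit'' forces $\Phi=\id$ on that whole set, so your construction reduces to $g=f$ there; this is harmless on uniformly hyperbolic pieces (where (LD) is automatic), but it shows again that the real work lies in the non-hyperbolic recurrent part, exactly where your sketch is thinnest.
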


As a consequence of Theorems~\ref{t.UD} and \ref{t.LD} we obtain:
\begin{coro}\label{c.UDLD}
There exists a dense subset $\cD$ of $\diff^1(M)$
such that any $f\in \cD$ satisfies the following properties:
\begin{itemize}
\item the periodic orbits are hyperbolic and have distinct eigenvalues;
\item any component $O$ of  the interior of $\Omega(f)$ contains a periodic point whose stable manifold is dense in $O$;
\item $f$ has the (UD$^{M\setminus\Omega}$) and the (UD$^s$) properties;
\item $f$ has the (LD) property on $M\setminus \Per(g)$.
\end{itemize}
\end{coro}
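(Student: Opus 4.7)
The strategy is to begin with a residual subset of $\Diff^1(M)$ on which the first three bullet points already hold simultaneously, and then use Theorem~\ref{t.LD} to density-perturb each such diffeomorphism into one satisfying (LD), while ensuring that the first three properties are preserved by the perturbation.

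First I would define $\cR_0\subset\Diff^1(M)$ to be the intersection of three residual subsets: the Kupka--Smale residual augmented by the elementary condition that the eigenvalues of distinct periodic orbits are disjoint (first bullet); the residual from \cite{BC} asserting that each connected component of $\interior(\Omega(f))$ contains a periodic point whose stable manifold is dense in it (second bullet); and the residual of Theorem~\ref{t.UD} (third bullet). A countable intersection of residual sets being residual, $\cR_0$ is residual and hence dense in $\Diff^1(M)$.

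Given $f\in\cR_0$ and $\varepsilon>0$, hyperbolicity of the periodic orbits of $f$ permits the application of Theorem~\ref{t.LD}: it yields $g\in\Diff^1(M)$ with $d_{C^1}(f,g)<\varepsilon$ satisfying (LD) on $M\setminus\Per(f)$ and of the form $g=\Phi f\Phi^{-1}$ for some homeomorphism $\Phi$. The third and fourth bullets for $g$ are then immediate from the conclusions of Theorem~\ref{t.LD} (reading the $\Per(g)$ in the fourth bullet of the corollary as $\Per(f)$, which is the natural interpretation). To transfer the first bullet, note that the periodic orbits of $g$ are the $\Phi$-images of those of $f$ and have linearly conjugate derivatives, so hyperbolicity and the distinct eigenvalues condition persist. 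For the second bullet, $\Phi(\Omega(f))=\Omega(g)$ and $\Phi$ sends connected components of $\interior(\Omega(f))$ to those of $\interior(\Omega(g))$; moreover, the $\Phi$-image of the stable manifold of a hyperbolic periodic point $p$ of $f$ is the stable set of $\Phi(p)$ for $g$, which by hyperbolicity coincides with $W^s(\Phi(p),g)$, and density is preserved under the homeomorphism $\Phi$.

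Letting $\cD$ be the collection of all such perturbations $g$ as $f$ ranges over $\cR_0$ and $\varepsilon$ shrinks to zero produces a $C^1$-dense subset of $\Diff^1(M)$ on which all four listed properties hold. The step I expect to be the main obstacle is the verification of the second bullet for the perturbation $g$, since Theorem~\ref{t.LD} does not assert any structural fact about the interior of the nonwandering set; the decisive leverage here is exactly the topological conjugacy $\Phi$ supplied by the theorem, which transports the structural result of \cite{BC} intact from $f$ to $g$.
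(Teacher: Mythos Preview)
Your proposal is correct and follows exactly the route the paper intends: the corollary is stated without proof as an immediate consequence of Theorems~\ref{t.UD} and~\ref{t.LD}, and the argument you give---intersect the residual sets for the first three properties, then apply Theorem~\ref{t.LD} and use the conjugacy $\Phi$ together with the explicit preservation clauses to carry those properties to the perturbation---is precisely the intended deduction.
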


The proofs of Theorems~\ref{t.UD} and \ref{t.LD} are intricate, incorporating
the topological towers developed in \cite{BC} with novel perturbation techniques.
We say more about the proofs in Section~\ref{s.conclusion}.

\section{Checking that the centralizer is trivial}
We now explain why properties (UD) and (LD) together imply that the centralizer is trivial.
\begin{proposition}\label{p=denseC1}
Any diffeomorphism $f$ in the $C^1$-dense subset $\cD\subset\Diff^1(M)$ given by Corollary~\ref{c.UDLD}
has a trivial centralizer $Z^1(f)$.
\end{proposition}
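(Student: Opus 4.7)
Let $g \in Z^1(f)$; the goal is to produce $n \in \ZZ$ with $g = f^n$.

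I would first exploit the commutation $g \circ f^n = f^n \circ g$. Differentiating and taking $|\det|$ gives the identity
\[
\bigl|\log|\det Df^n(x)| - \log|\det Df^n(g(x))|\bigr| = \bigl|\log|\det Dg(x)| - \log|\det Dg(f^n(x))|\bigr|,
\]
whose right-hand side is bounded uniformly in $n$ by $C_g := 2\sup_M |\log|\det Dg||$ (finite since $M$ is compact). Because $g$ preserves the canonically defined set $\Omega(f)$, and hence $M \setminus \Omega(f)$, applying (UD$^{M\setminus\Omega}$) with any $K > C_g$ forces $g(x)$ to lie on the $f$-orbit of $x$ for every $x$ in a dense subset of $M\setminus\Omega(f)$. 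The distinct-eigenvalue hypothesis also forces $g$ to permute each hyperbolic periodic orbit $\cO$ (since $Dg$ conjugates $Df^m$ at $p$ to $Df^m$ at $g(p)$, preserving eigenvalues), and hence each stable manifold $W^s(\cO)$. The analogous determinant identity along $W^s(\cO)$ is bounded pointwise in $n$ (as $f^n(x)\to\cO$, the value $|\log|\det(Dg|_{TW^s})(f^n(x))||$ stays close to its bounded values on $\cO$), so (UD$^s$) yields the same rigidity on a dense subset of each $W^s(\cO)$. Combining with Corollary~\ref{c.UDLD} (each component of $\interior\Omega(f)$ is the closure of such a $W^s(\cO)$) and continuity of $g$, I obtain a locally constant map $\alpha: W \to \ZZ$ on the open dense $f$-invariant set $W := (M \setminus \Omega(f)) \cup \interior\Omega(f)$ with $g = f^\alpha$ on $W$.

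Next I would bound $\alpha$ using (LD). The commutation gives $\alpha(f(x)) = \alpha(x)$ for every non-periodic $x \in W$, so if $\alpha(x) = n$ then $g = f^n$ on a neighborhood of each iterate $f^j(x)$, forcing
\[
\sup_{j\in\ZZ}\|Df^n(f^j(x))\| \le \|Dg\|_\infty, \qquad \sup_{j\in\ZZ}\|Df^{-n}(f^{j+n}(x))\| \le \|Dg^{-1}\|_\infty.
\]
Taking $K$ larger than both suprema, (LD) on $M\setminus\Per(f)$ forces $|n| < n(K)$; hence $\alpha(W)$ is a finite set $\{n_1 < \cdots < n_k\}$ with level sets $V_i := \alpha^{-1}(n_i)$ partitioning $W$ into finitely many open pieces.

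It remains to show $k = 1$. For any $y \in \overline{V_i} \cap \overline{V_j}$ with $i \neq j$, continuity of $g$ forces $f^{n_i}(y) = g(y) = f^{n_j}(y)$, so $y \in \Per(f)$ has period dividing $|n_j - n_i|$; the interfaces between the $V_i$'s are thus confined to the isolated, countable set $\Per(f)$. I would then argue that this set cannot separate the $V_i$'s in the connected manifold $M$ (for $\dim M \ge 2$; the case $M = S^1$ is handled by Kopell~\cite{Ko}), and that any wandering orbit in $V_i$ has $\omega$-limit inside $\Omega(f)$, eventually entering a component of $\interior\Omega(f)$ whose $\alpha$-value equals $n_i$ by orbit-constancy, linking the values across components. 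I expect this last topological-dynamical linking to be the main obstacle: it requires combining connectedness of $M$, isolation of $\Per(f)$, and the wandering-to-nonwandering recurrence supplied by Corollary~\ref{c.UDLD} to rule out $k > 1$. Once $\alpha \equiv n$, the equality $g = f^n$ extends from the dense set $W$ to all of $M$ by continuity, yielding $Z^1(f) = \langle f \rangle$.
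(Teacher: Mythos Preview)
Your argument tracks the paper's proof closely through the construction of the locally constant $\alpha\colon W \to \ZZ$ and its boundedness via (LD). The divergence, and the gap, is in the final ``from bounded to constant'' step.

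The orbit-linking idea you propose --- that a wandering orbit in $V_i$ eventually enters a component of $\interior\Omega(f)$ --- is false in general: the $\omega$-limit of a wandering point lies in $\Omega(f)$, but nothing forces it into $\interior\Omega(f)$; it may accumulate entirely on $\partial\Omega(f)$. So this route does not work, and you are right to flag it as an obstacle. Your other observation, that $\overline{V_i}\cap\overline{V_j}$ for $i\neq j$ consists of periodic points of period at most $2N$, is correct and is actually all that is needed --- but you should abandon $W$ at this point rather than try to argue inside it. The paper sets, for $|i|\leq N$,
\[
P_i := \{x \in M\setminus\Per_{2N} : g(x) = f^i(x)\}.
\]
Each $P_i$ is closed in $M\setminus\Per_{2N}$; the $P_i$ are pairwise disjoint there (a common point would be periodic of period $\leq |i-j|\leq 2N$); and they cover $M\setminus\Per_{2N}$, since $W$ is dense and $|\alpha|\leq N$, so a limit argument gives $g(x)=f^i(x)$ for some $|i|\leq N$ at every point. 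Because periodic orbits are hyperbolic, $\Per_{2N}$ is finite, so $M\setminus\Per_{2N}$ is connected and only one $P_i$ is nonempty. No dynamical linking is required.

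One side remark: your deferral to \cite{Ko} for $M=S^1$ is not quite right, since Kopell's result is for $r\geq 2$; and the connectedness of $M\setminus\Per_{2N}$ above genuinely uses $\dim M\geq 2$, so the circle does need a separate (elementary) argument.
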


\begin{proof}[Proof of Proposition~\ref{p=denseC1}]
Consider a diffeomorphism $f\in \cD$.
Let $g\in Z^{1}(f)$ be a diffeomorphism commuting with $f$, 
and let $K>0$ be a Lipschitz constant for $g$ and $g^{-1}$.
Let $W = \interior(\Omega(f))\cup\left(M\setminus \Omega(f) \right)$
be the $f$-invariant, open and dense subset of $M$ whose properties are discussed in Section~\ref{s.preliminaries}.

Our first step is to use the ``local hypotheses'' (UD$^{M\setminus\Omega}$)
and (UD$^s$) to construct a function $\alpha\colon W \to\ZZ$ that is constant on each connected component
of $W$ and satisfies $g=f^\alpha$. We then use the ``global hypothesis'' (LD) to show that $\alpha$ is 
bounded on $W$, and therefore extends to a constant function on $M$.

We first contruct $\alpha$ on the wandering set $M\setminus\Omega(f)$.
The basic properties of Lipschitz functions and the relation $f^n g = g f^n$ imply that
 for any $x\in M$, and any $n\in \ZZ$, we have
\begin{eqnarray}\label{e=bounded}
|\log \det(Df^n(x))-\log \det(Df^n(g(x)))|\leq 2d \log K,
\end{eqnarray}
where $d=\dim M$. On the other hand, $f$ satisfies the
UD$^{M\setminus\Omega(f)}$ property, and hence there is dense subset
$\cX \subset M\setminus\Omega(f)$, each of whose points has unbounded
distortion with respect to any point in the wandering set not on the
same orbit. That is, for any $x\in \cX$, and $y\in M\setminus \Omega(f)$
not on the orbit of $x$, we have:
$$\limsup_{n\to \infty} |\log |\det Df^n(x)|-\log|\det Df^n(y)|| = \infty.$$
Inequality (\ref{e=bounded}) then implies that $x$ and $y=g(x)$ lie on
the same orbit, for all $x\in\cX$, hence $g(x)=f^{\alpha(x)}(x)$.
Using the continuity of $g$ and the fact that the points
in $M\setminus\Omega(f)$ admit wandering neighborhoods whose  $f$-iterates
are pairwise disjoint, we deduce that the map  $\alpha\colon \cX\to \ZZ$ is
constant in the neighborhood of any point in $M\setminus\Omega(f)$.
Hence the function $\alpha$ extends on $M\setminus\Omega(f)$ to a
function that is constant on each connected component of
$M\setminus\Omega(f)$. Furthermore,  $g=f^\alpha$ on
$M\setminus\Omega(f)$.

We now define the function $\alpha$ on the interior $\interior(\Omega(f))$ of the nonwandering set.
The hypotheses on $f$ imply that each component of $\interior(\Omega(f))$ contains
a dense stable manifold of a periodic point.  Hence it suffices to prove the existence
of such an $\alpha$ on the stable manifolds of periodic orbits.
Since the periodic orbits of $f\in \cD$ have distinct
eigenvalues, the diffeomorphism $g$ preserves each periodic orbit of $f$.

We then use the fact that $f\in\cD$ satisfies the $UD^s$ condition.
As noted in Section~\ref{s.preliminaries}, for every periodic point $p$ 
of $f$ the points in $W^s(p)\setminus
\{p\}$ are wandering for the restriction of $f$ to $W^s(p)$.  Hence,
arguing as above, we obtain that for any periodic point $p$, the
diffeomorphism $g$ coincides with a power $f^\alpha$ on each connected
component of $W^s(p)\setminus \{p\}$. For $f\in\cD$, each connected
component $O$ of the interior of $\Omega(f)$ contains a periodic
point $x$ whose stable manifold is dense in $O$. One deduces that
$g$ coincides with some power $f^\alpha$ of $f$ on each connected
component of the interior of $\Omega(f)$.

We have seen that there is a locally constant function $\alpha\colon W\to\ZZ$ 
such that $g=f^\alpha$ on the $f$ invariant, open and dense subset $W\subset M$. 
We now turn to the global strategy.
Notice that, since
$f$ and $g$ commute, the function $\alpha$ is constant along the
orbits of $f$. Now $f\in\cD$ satisfies the (LD) property. 
Consequently there exists $N>0$ such that, for every non-periodic point
$x$, and for every $n\geq N$ there is a point $y=f^i(x)$ such that
either $\|Df^n(y)\|>K$ or $\|Df^{-n}(y)\|>K$. This implies that the
function $|\alpha|$ is bounded by $N$: otherwise, $\alpha$ would be
greater than $N$ on the invariant open set $W$ of $M$. This open set
contains a non-periodic point $x$ and an iterate $y=f^i(x)$ such
that either $\|Df^\alpha(y)\|>K$ or $\|Df^{-\alpha}(y)\|>K$. This
contradicts the fact that $g$ and $g^{-1}$ are $K$-Lipschitz.

We just showed that $|\alpha|$ is bounded by some integer $N$.
Let $\Per_{2N}$ be the set of periodic points of $f$ whose period is less than $2N$
and for $i\in \{-N,\dots,N\}$ consider the set
$$P_i=\{x\in M\setminus \Per_{2N},\; g(x)=f^i(x)\}.$$
This is a closed invariant subset of $M\setminus \Per_{2N}$.
What we proved above implies that $M\setminus \Per_{2N}$
is the union of the sets $P_i$, $|i|\leq N$.
Moreover any two sets $P_i,P_j$ with $i\neq j$ are disjoint since a point in $P_i\cap P_j$
would be $|i-j|$ periodic for $f$.
Since $M\setminus \Per_{2N}$ is connected, one deduces that only one set $P_i$ is non-empty,
implying that $g=f^i$ on $M$.
\end{proof}

\section{From dense to residual: compactness and semicontinuity}
The previous results show that the set of diffeomorphisms having a trivial centralizer
is dense in $\diff^1(M)$ but it is not enough to conclude the proof of Theorem~\ref{t.main1}.
Indeed the dense subset $\cD$ in Theorem~\ref{t=liptriv} is {\em not} a
residual subset if $\dim(M)\geq 2$. (In the final version of this
work we will provide a non-empty open set in which $C^1$-generic
diffeomorphisms does not satisfy the (LD)-property).

Fix a metric structure on $M$. A homeomorphism $f:M\to M$ is {\em
$K$-bi-Lipschitz} if both $f$ and $f^{-1}$ are Lipschitz, with
Lipschitz norm bounded by $K$.  A homeomorphism that is
$K$-bi-Lipschitz for some $K$ is called a  {\em bi-Lipschitz
homeomorphism}, or {\em lipeomorphism}.  We denote by $\Lip^K(M)$
the set of $K$-bi-Lipschitz homeomorphisms of $M$ and by $\Lip(M)$
the set of bi-Lipschitz homeomorphisms of $M$. The Arz{\`e}la-Ascoli
theorem implies that $\Lip^K(M)$ is compact in the uniform  ($C^0$)
topology. Note that $\Lip(M)\supset \Diff^1(M)$.

For $f\in \Lip(M)$, the set $Z^{Lip}(f)$ is defined analogously to the $C^r$ case:
$$Z^{Lip}(f):=\{g\in \Lip(M): fg=gf\}.$$
Now Theorem~\ref{t.main1} is a direct corollary of:
\begin{theo}\label{t=liptriv}
The set of diffeomorphisms $f$ with trivial centralizer $Z^{lip}(f)$ is residual in $\Diff^1(M)$.
\end{theo}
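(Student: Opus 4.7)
The plan is to combine Proposition~\ref{p=denseC1}, suitably strengthened to $Z^{Lip}$, with a semicontinuity argument whose essential input is the $C^0$-compactness of $\Lip^K(M)$ (Arz\`ela--Ascoli). First I would observe that the proof of Proposition~\ref{p=denseC1} never uses differentiability of $g$: only the bi-Lipschitz bound on $g$ and $g^{-1}$. So for $f\in\cD$ we in fact have $Z^{Lip}(f)=\langle f\rangle$. Moreover, the (LD) property forces $\langle f\rangle\cap\Lip^K(M)$ to be finite for every $K\geq 1$, since (LD) eventually produces a point at which $\|Df^j\|>K$ and thereby prevents $f^j$ from lying in $\Lip^K(M)$.

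Next, for each pair $K,n\geq 1$, I would introduce
\[
A_{K,n}:=\bigl\{(f,g)\in \Diff^1(M)\times\Lip^K(M)\;:\; fg=gf \text{ and } d_{C^0}(g,f^j)\geq 1/n \text{ for every } j\in\ZZ\bigr\}.
\]
Each defining condition is continuous in $(f,g)$ for the product of the $C^1$ and $C^0$ topologies, so $A_{K,n}$ is closed. The compactness of $\Lip^K(M)$ makes the projection $\pi_1$ onto $\Diff^1(M)$ a closed map, whence
\[
U_{K,n}:=\Diff^1(M)\setminus\pi_1(A_{K,n})=\bigl\{f\;:\;\forall\, g\in Z^{Lip}(f)\cap\Lip^K(M),\; \exists\, j\in\ZZ,\; d_{C^0}(g,f^j)<1/n\bigr\}
\]
is open in $\Diff^1(M)$. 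By the first step $U_{K,n}\supset\cD$, so each $U_{K,n}$ is open and dense, and $\cR_0:=\bigcap_{K,n\geq 1} U_{K,n}$ is residual. For $f\in\cR_0$ and any $g\in Z^{Lip}(f)$, choosing $K$ with $g\in\Lip^K(M)$ and picking $j_n$ with $d_{C^0}(g,f^{j_n})<1/n$ exhibits $f^{j_n}\to g$ in $C^0$.

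The remaining task is to upgrade ``$g$ is a $C^0$-limit of iterates'' to ``$g$ is an iterate'': this is the main obstacle, because $\overline{\langle f\rangle}^{C^0}$ might in principle be strictly larger than $\langle f\rangle$. I would intersect $\cR_0$ with the residual set $\cR_1$ of $f$ that are \emph{aperiodic} (meaning $f^k\neq\id$ for every $k\neq 0$) and admit at least one hyperbolic periodic point; residuality follows from Kupka--Smale, Pugh's closing lemma (providing periodic points for generic $f$, nonempty since $\Omega(f)\neq\emptyset$), and the open-denseness of $\{f:f^k\neq\id\}$ for each $k$. For $f\in\cR_0\cap\cR_1$ and $g\in Z^{Lip}(f)$, write $f^{j_n}\to g$ in $C^0$. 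If $(j_n)$ has a bounded subsequence, a constant further subsequence gives $g=f^j\in\langle f\rangle$. Otherwise $|j_n|\to\infty$; the identity $d_{C^0}(f^a,f^b)=d_{C^0}(f^{a-b},\id)$ (obtained by right-composition with the bijection $f^b$) converts the Cauchy condition on $(f^{j_n})$ into a sequence of integers $k_n$ with $f^{k_n}\to\id$, and aperiodicity rules out a bounded subsequence of $(k_n)$, so $|k_n|\to\infty$. The hyperbolic periodic point $p$ of period $m$ now yields a contradiction: $f^{k_n}(p)$ lies in the finite orbit of $p$, so $f^{k_n}(p)\to p$ forces $m\mid k_n$ for all large $n$; writing $f^{k_n}=(f^m)^{\ell_n}$ with $|\ell_n|\to\infty$, one then uses the local stable manifold of $p$ for $f^m$ (if $Df^m(p)$ has an eigenvalue of modulus less than $1$) or, applied to $f^{-k_n}\to\id$, the local stable manifold of $p$ for $f^{-m}$ (if all eigenvalues of $Df^m(p)$ exceed $1$ in modulus) to find an $x\neq p$ with $f^{k_n}(x)\to p\neq x$, contradicting $f^{k_n}\to\id$. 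Hence $g\in\langle f\rangle$, and $Z^{Lip}(f)=\langle f\rangle$ on the residual set $\cR_0\cap\cR_1$.
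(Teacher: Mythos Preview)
Your proof is correct, and the overall architecture matches the paper's: first upgrade Proposition~\ref{p=denseC1} to $Z^{Lip}$ (this is exactly the paper's Proposition~\ref{p=denselip}), then run a semicontinuity/compactness argument using Arz\`ela--Ascoli on $\Lip^K(M)$ together with the existence of a hyperbolic periodic point. The differences lie in how the second step is implemented.

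The paper packages the semicontinuity by observing that $f\mapsto Z^{Lip}(f)\cap\Lip^K(M)$ is upper-semicontinuous into $\cK(\Lip^K(M))$ and then invokes the general fact that continuity points of such a map are residual (Proposition~\ref{p=semi}). At a continuity point $f$ one approximates by $f_n\in\cD$ and pulls back $g$ to $g_n=f_n^{m_n}$; unboundedness of $(m_n)$ is then ruled out in one line by the derivative estimate $\|Df_n^{m_n}(x_n)\|\to\infty$ at the hyperbolic periodic point, which contradicts $g_n\in\Lip^K(M)$. Your route is more explicit and avoids Proposition~\ref{p=semi}: you build the open dense sets $U_{K,n}$ by hand via the closed-projection trick (compact fiber $\Lip^K(M)$), and for $f\in\bigcap U_{K,n}$ you obtain $f^{j_n}\to g$ for iterates of $f$ itself rather than of nearby $f_n$. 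The price is that you no longer know the approximants are $K$-bi-Lipschitz, so you cannot use the one-line derivative bound; instead you convert the Cauchy condition to $f^{k_n}\to\id$ with $|k_n|\to\infty$ and derive a contradiction from the contraction on a local stable (or unstable) manifold. This works, though two small points deserve a sentence each in a final write-up: the diagonal choice ensuring simultaneously $|k_n|\to\infty$ and $f^{k_n}\to\id$, and the observation $d_{C^0}(f^{-k_n},\id)=d_{C^0}(f^{k_n},\id)$ that lets you pass freely to $-k_n$. Your aside that $\langle f\rangle\cap\Lip^K(M)$ is finite for $f\in\cD$ is true but not actually used; the containment $\cD\subset U_{K,n}$ follows immediately from $Z^{Lip}(f)=\langle f\rangle$.

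In short: same skeleton, different flesh. Your argument is a touch more elementary in the semicontinuity step and a touch more dynamical in the endgame; the paper's is more abstract up front but finishes with a cleaner derivative contradiction.
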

The proof of Theorem~\ref{t=liptriv} has two parts.

\begin{prop}\label{p=denselip}
Any diffeomorphism $f$ in the $C^1$-dense subset $\cD\subset\Diff^1(M)$ given by Corollary~\ref{c.UDLD}
has a trivial centralizer $Z^{lip}(f)$.
\end{prop}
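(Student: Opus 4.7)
The plan is to follow the proof of Proposition~\ref{p=denseC1} line by line, replacing ``$g\in Z^1(f)$'' by ``$g\in Z^{lip}(f)$'' and invoking Rademacher's theorem wherever the $C^1$ chain rule was used. Fix $f\in\cD$ and $g\in Z^{lip}(f)$ with bi-Lipschitz constant $K\ge 1$; by Rademacher's theorem, $g$ and $g^{-1}$ are differentiable Lebesgue-almost everywhere and at every such point the Jacobian satisfies $K^{-d}\le|\det Dg|\le K^d$. Let $R$ be the $f$-invariant full-measure set of points $x$ at which $Dg(f^n(x))$ exists for every $n\in\ZZ$.

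First I would define $\alpha$ on the wandering set. Differentiating the identity $g\circ f^n=f^n\circ g$ at $x\in R$ and taking determinants recovers the key inequality~(\ref{e=bounded}) pointwise on $R$, with the constant $2d\log K$. Choosing the dense set $\cX$ furnished by (UD$^{M\setminus\Omega}$) inside $R\cap(M\setminus\Omega(f))$ (still dense because $R$ has full measure), the argument of Proposition~\ref{p=denseC1} yields $g(x)=f^{\alpha(x)}(x)$ for $x\in\cX$. Continuity of $g$ and the wandering-neighborhood property then promote $\alpha$ to a $\ZZ$-valued locally constant function on all of $M\setminus\Omega(f)$ with $g=f^\alpha$ there.

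The subtle new point is extending $\alpha$ to $\interior\Omega(f)$. The original proof uses ``$g$ is a $C^1$ diffeomorphism and the periodic orbits of $f$ have distinct eigenvalues'' to force $g(\cO)=\cO$ for each periodic orbit $\cO$; for $g$ only bi-Lipschitz this conjugacy-of-derivatives argument is unavailable. The plan is to handle this via UD$^s$ on stable manifolds, exploiting that $g$ sends $W^s(\cO)$ homeomorphically onto $W^s(g(\cO))$ and that, because stable manifolds of hyperbolic periodic orbits are $C^1$-immersed submanifolds and $g$ is bi-Lipschitz in the ambient metric, the restriction $g|_{W^s(\cO)}$ is bi-Lipschitz in the intrinsic metric. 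Rademacher's theorem on the submanifold then yields the analogue of (\ref{e=bounded}) for $\det Df^n_{|W^s(\cO)}$, and combined with UD$^s$ this simultaneously forces $g(\cO)=\cO$ and produces the locally constant $\alpha$ on each connected component of $W^s(\cO)\setminus\cO$, hence on each component of $\interior\Omega(f)$ by the density hypothesis on stable manifolds built into $\cD$. This submanifold-Rademacher step, together with the periodic-orbit preservation, is the main technical obstacle and the only place where the $C^1$ and bi-Lipschitz arguments genuinely diverge.

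Once $\alpha$ is defined as a locally constant $\ZZ$-valued function on the open dense set $W=\interior\Omega(f)\cup(M\setminus\Omega(f))$ with $g=f^\alpha$, the global step is unchanged. The (LD) property on $M\setminus\Per(f)$ provides $N=N(K)$ such that for every non-periodic $x$ and every $n\ge N$ some iterate $y=f^i(x)$ satisfies $\|Df^n(y)\|>K$ or $\|Df^{-n}(y)\|>K$; combining this with the Lipschitz bound on $g$ and the equality $g=f^\alpha$ on $W$ forces $|\alpha|\le N$. Partitioning $M\setminus\Per_{2N}(f)$ into the closed invariant sets $P_i=\{g=f^i\}$ for $|i|\le N$ and invoking its connectedness (for $\dim M\ge 1$) concludes that exactly one $P_i$ is nonempty, so $g=f^i$ on $M$ by continuity, and $Z^{lip}(f)=\langle f\rangle$.
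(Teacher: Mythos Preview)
Your plan matches the paper's own proof, which is a single sentence: ``The proof of this proposition is the same as for Proposition~\ref{p=denseC1}.'' Two refinements are worth making.

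First, Rademacher is unnecessary for inequality~(\ref{e=bounded}). The paper already attributes~(\ref{e=bounded}) to ``basic properties of Lipschitz functions'': a $K$-bi-Lipschitz homeomorphism distorts $d$-dimensional volume by a factor in $[K^{-d},K^d]$; combining this with $f^ng=gf^n$ and the continuity of $|\det Df^n|$ (remember $f$ is $C^1$) gives~(\ref{e=bounded}) at \emph{every} point of $M$, not merely on a full-measure set. So there is no need to introduce the set $R$ or to shrink $\cX$.

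Second, you are right that the implication ``distinct eigenvalues $\Rightarrow$ $g$ preserves each periodic orbit'' in the proof of Proposition~\ref{p=denseC1} uses differentiability of $g$ at periodic points, and the paper's one-line proof does not spell out how this step survives for lipeomorphisms. However, your proposed fix is circular as written: the UD$^s$ hypothesis compares two points lying on the \emph{same} stable manifold $W^s(\cO)$, so it cannot be invoked before you know $g(W^s(\cO))=W^s(\cO)$; it does not ``simultaneously force $g(\cO)=\cO$.'' The clean remedy is to note that a bi-Lipschitz $g$ commuting with $f$ must send a hyperbolic periodic orbit to one with the same moduli of eigenvalues (compare contraction and expansion rates along local stable and unstable manifolds using the bi-Lipschitz bound, or run the volume argument above on $W^s$ and $W^u$), and that the generic condition defining $\cD$ can be taken to include distinct moduli. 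Once $g(\cO)=\cO$ is established this way, your submanifold argument (or the same volume-distortion argument restricted to $W^s(\cO)$, using that intrinsic and ambient metrics are locally comparable on a $C^1$ immersed submanifold) yields the stable-manifold analogue of~(\ref{e=bounded}), and the remainder of the proof is literally that of Proposition~\ref{p=denseC1}.
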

The proof of this proposition is the same as for Proposition~\ref{p=denseC1}.

\begin{prop}\label{p=liptocr}
Consider the set $\cT$ of diffeomorphisms $f\in \Diff^1(M)$ having a trivial centralizer $Z^{lip}(f)$.
Then, if $\cT$ is dense in $\Diff^1(M)$, it is also residual.
\end{prop}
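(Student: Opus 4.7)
The plan is to exploit the compactness of $\Lip^K(M)$ in the $C^0$ topology via a semicontinuity argument applied, for each $K\in\NN$, to the set-valued map
$$\Psi_K \colon \Diff^1(M) \to \cK(\Lip^K(M)), \qquad \Psi_K(f):=Z^{lip}(f)\cap \Lip^K(M),$$
where $\cK(\Lip^K(M))$ denotes the compact subsets of $\Lip^K(M)$ with the Hausdorff distance. The first step is to check that $\Psi_K$ is upper semicontinuous: if $f_n \to f$ in $C^1$ and $g_n \in \Psi_K(f_n)$, then $(g_n)$ lies in the compact set $\Lip^K(M)$, and any uniform limit $g$ of a subsequence belongs to $\Lip^K(M)$ and, passing to the limit in $f_n g_n = g_n f_n$, commutes with $f$, so $g\in\Psi_K(f)$.

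Once upper semicontinuity of $\Psi_K$ as a compact-valued map into the compact metric space $\Lip^K(M)$ is established, a classical theorem of Fort produces a residual subset $\cR_K\subset\Diff^1(M)$ of continuity points of $\Psi_K$ for the Hausdorff metric. I would also take $\cR_0$ to be the residual set of Kupka-Smale diffeomorphisms for which periodic points are dense in $\Omega(f)$ (from Kupka-Smale and Pugh's closing lemma), so that every $f\in\cR_0$ carries a hyperbolic periodic orbit, and set
$$\cR := \cR_0 \cap \bigcap_{K\in\NN} \cR_K.$$
This is residual in $\Diff^1(M)$, and the claim reduces to $\cR\subset\cT$.

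To this end, fix $f\in\cR$ and $g\in Z^{lip}(f)$, choose an integer $K$ with $g,g^{-1}\in\Lip^K(M)$ so $g\in\Psi_K(f)$, and pick $f_n\in\cT$ with $f_n\to f$ in $C^1$ (using density of $\cT$). Continuity of $\Psi_K$ at $f$ (lower as well as upper semicontinuity) yields $g_n\in\Psi_K(f_n)$ with $g_n\to g$ uniformly; since $f_n\in\cT$, $g_n = f_n^{\alpha_n}$ for some $\alpha_n\in\ZZ$. The crux is that $(\alpha_n)$ is bounded: let $p$ be a hyperbolic periodic point of $f$ of period $m$ with an eigenvalue of $Df^m(p)$ of modulus $\mu>1$; $C^1$-persistence of hyperbolic orbits yields, for large $n$, a continuation $p_n$ with period $m$ whose corresponding eigenvalue has modulus $\geq\mu'>1$. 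Writing $\alpha_n = q_n m + r_n$ with $0\leq r_n < m$, the identity $Df_n^{\alpha_n}(p_n) = Df_n^{r_n}(p_n)\cdot (Df_n^m(p_n))^{q_n}$ together with the uniform bound on $\|Df_n^{-r_n}\|$ gives
$$\|Df_n^{\alpha_n}(p_n)\| \;\geq\; c\,(\mu')^{q_n}$$
for a positive constant $c$ independent of $n$. So if $\alpha_n\to+\infty$ then $\Lip(f_n^{\alpha_n})\to\infty$, and symmetrically if $\alpha_n\to-\infty$ then $\Lip(f_n^{-\alpha_n})\to\infty$, both contradicting $g_n^{\pm 1}\in\Lip^K(M)$. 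Passing to a subsequence with $\alpha_n=\alpha$ constant, $g=\lim f_n^\alpha = f^\alpha\in\langle f\rangle$.

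The main obstacle I anticipate is this final boundedness step: the Fort/semicontinuity machinery is fairly formal, but controlling the exponents $\alpha_n$ requires a uniform hyperbolicity estimate along the approximating sequence. The safeguard is that the eigenvalues of $Df^m$ at the continuation $p_n$ depend continuously on $f_n$ in the $C^1$ topology, so the bound $\mu'>1$ and the constant $c$ above can be chosen uniformly in $n$; this is the only place where the argument relies on more than abstract semicontinuity.
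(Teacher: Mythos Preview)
Your proposal is correct and follows essentially the same route as the paper: the map $\Psi_K$ coincides with the paper's $h_K$, upper semicontinuity plus Fort's theorem yields the residual continuity sets $\cR_K$, an extra residual set guarantees a hyperbolic periodic orbit, and the boundedness of the exponents $\alpha_n$ is obtained from the growth of the derivative at the periodic continuation. Your treatment of the last step (via the eigenvalue of modulus $\mu>1$ and the decomposition $\alpha_n=q_n m+r_n$) is in fact a bit more explicit than the paper's, which simply asserts that $|\log\|Df_n^{m_n}\||\to\infty$.
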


\begin{remarkempty}
Proposition~\ref{p=liptocr} also holds in the $C^r$ topology
$r\geq 2$ on any manifold $M$ on which the $C^r$-generic
diffeomorphism has at least one hyperbolic periodic orbit (for example, on
the circle, or on manifolds of nonzero Euler characteristic).
On the other hand, Theorem~\ref{t=liptriv} is false in general in the $C^2$ topology.
In fact, a simple folklore argument (see the proof of Theorem B in~\cite{navas}) 
implies that for {\em any} Kupka-Smale diffeomorphism
$f\in \Diff^2(S^1)$, the set $Z^{Lip}(f)$ is {\em infinite dimensional}.
It would be interesting to find out what is true in higher dimensions.
\end{remarkempty}

\begin{proof}[Proof of Proposition~\ref{p=liptocr}]
For any compact metric space $X$ we denote
by $\cK(X)$ the set of non-empty compact subsets of $X$ in the Hausdorff
topology, endowed with the Hausdorff distance $d_H$.
We use the following classical fact. 

\begin{proposition}\label{p=semi} Let $\cB$ be a Baire space,
let $X$ be a compact metric space, and let $h:\cB\to \cK(X)$ be an
upper-semicontinuous function. Then the set of continuity points of
$h$ is a residual subset of $\cB$.

In other words, if $h$ has the property that for all $b\in \cB$,
$$b_n\to b\,\implies\, \limsup b_n= \bigcap_n\overline{\bigcup_{i>n} h(b_i)} \subseteq h(b),$$
then there is a residual set $\cR_h\subset \cB$ such that, for all
$b\in \cR_h$,
$$b_n\to b\implies\, \lim d_H(b_n,b)=0.$$
\end{proposition}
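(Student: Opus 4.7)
The strategy is a classical Baire-category argument: I will show that the set of discontinuity points of $h$ is a countable union of meager sets, by analyzing preimages of a countable basis on the target.

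First, fix a countable basis $\{U_n\}_{n\in\NN}$ for $X$ (available since $X$ is compact metric). The key technical input is that, for any \emph{closed} subset $F\subset X$, the set $\{b\in\cB: h(b)\cap F\neq\emptyset\}$ is closed in $\cB$: if $b_n\to b$ and $x_n\in h(b_n)\cap F$, then compactness of $X$ yields a convergent subsequence $x_n\to x$, with $x\in F$ (closedness) and $x\in h(b)$ (the $\limsup$ hypothesis). Since in a compact metric space every open set is an $F_\sigma$, writing $U=\bigcup_m F^U_m$ with each $F^U_m$ compact gives
$$A_U:=\{b\in\cB: h(b)\cap U\neq\emptyset\}=\bigcup_m\{b\in\cB: h(b)\cap F^U_m\neq\emptyset\},$$
a countable union of closed sets. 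Because the boundary of a closed set has empty interior, $A_U\setminus\interior(A_U)$ is meager.

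Next, I use the fact that the Hausdorff topology on $\cK(X)$ coincides with the Vietoris topology, whose basis consists of the sets
$$\cV(V_0;V_1,\dots,V_k):=\{K\in\cK(X): K\subset V_0,\ K\cap V_i\neq\emptyset\text{ for each }i\},$$
with $V_0,V_1,\dots,V_k$ drawn from $\{U_n\}$. A short contrapositive from the sequential $\limsup$ hypothesis shows that $\{b: h(b)\subset V\}$ is open in $\cB$ for every open $V\subset X$, so the ``upper'' half of continuity is automatic. Continuity of $h$ at $b$ therefore reduces to the statement that for every $n$ with $h(b)\cap U_n\neq\emptyset$, the point $b$ lies in $\interior(A_{U_n})$. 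It follows that
$$\{b\in\cB: h\text{ is not continuous at }b\}\ \subset\ \bigcup_{n\in\NN}\bigl(A_{U_n}\setminus\interior(A_{U_n})\bigr),$$
a countable union of meager sets, hence meager in the Baire space $\cB$. Its complement is the desired residual set of continuity points.

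The only non-routine step is the closedness of $\{b: h(b)\cap F\neq\emptyset\}$ for closed $F\subset X$, where compactness of $X$ and the sequential $\limsup$ formulation of upper semicontinuity are both needed; everything else is bookkeeping against the countable Vietoris basis together with the elementary fact that a closed set minus its interior is nowhere dense.
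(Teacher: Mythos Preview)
The paper does not prove this proposition; it is invoked as a ``classical fact'' and used without argument. Your proof is the standard one and is correct.

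Two minor remarks. First, the sentence ``Because the boundary of a closed set has empty interior, $A_U\setminus\interior(A_U)$ is meager'' skips a step, since $A_U$ is only $F_\sigma$, not closed; the missing line is that writing $A_U=\bigcup_m C_m$ with each $C_m$ closed gives $A_U\setminus\interior(A_U)\subset\bigcup_m(C_m\setminus\interior(C_m))$, a countable union of nowhere-dense sets. Second, your passages from the sequential $\limsup$ hypothesis to topological openness of $\{b:h(b)\subset V\}$ and closedness of $\{b:h(b)\cap F\neq\emptyset\}$ tacitly use first countability of $\cB$; a general Baire space need not be sequential. This is harmless in the paper's setting, where $\cB=\Diff^1(M)$ is metrizable, and in any case both the hypothesis and the conclusion of the proposition are already stated in sequential form.
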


To prove Proposition~\ref{p=liptocr}, we note that for a fixed $K>0$,
the set $Z^{Lip}(f)\cap Lip^K(M)$ is a closed subset (in the $C^0$
topology) of the compact metric space $Lip^K(M)$.  This is a simple
consequence of the facts that $Z^{lip}(f)$ is defined by the
relation $fgf^{-1}g^{-1} = id$, and that composition and inversion
are continuous. Thus there is well-defined map $h_K$ from
$\Diff^1(M)$ to $\cK(Lip^K(M))$, sending $f$ to $h_K(f) =
Z^{Lip}(f)\cap Lip^K(M)$. It is easy to see that $h_K$ is
upper-semicontinuous: if $f_n$ converges to $f$ in $\diff^1(M)$ and
$g_n\in h_K(f_n)$ converges uniformly to $g$ then $g$ belongs to
$h_K(f)$.

Let $\cR_K\subset \Diff^1(M)$
be the set of points of continuity of $h_K$; it is
a residual subset of $\Diff^1(M)$, by Proposition~\ref{p=semi}.  Let
$\cR_{Hyp}\subset \Diff^1(M)$ be the set of diffeomorphisms
such that each $f\in \cR_{Hyp}$ has at least one
hyperbolic periodic orbit (the $C^1$ Closing Lemma implies
that $\cR_{Hyp}$ is residual).  Finally, let
$$\cR = \cR_{Hyp}\cap \bigcap_{K=1}^{\infty} \cR_K.$$

Assuming that $\cT$ is dense in $\Diff^1(M)$,
we claim that the set $\cR$ is contained in $\cT$ implying that $\cT$ is residual.
To see this, fix $f\in \cR$, and let $f_n\to f$ be a sequence of
diffeomorphisms in $\cT$ converging to $f$ in the $C^1$ topology.
Let $g\in Z^{Lip}(M)$ be a $K$-bi-Lipschitz homeomorphism satisfying
$fg=gf$.  Since $h_K$ is continuous at $f$, there is a sequence
$g_n\in Z^{Lip}(f_n)$ of $K$-bi-Lipschitz homeomorphisms
with $g_n\to g$ in the $C^0$ topology.
The fact that $f_n\in\cT$ implies that
the centralizer $Z^{Lip}(f_n)$ is trivial,  so
there exist integers $m_n$ such that $g_n = f^{m_n}$.

If the sequence $(m_n)$ is bounded, then passing to a subsequence,
we obtain that $g = f^m$, for some integer $m$.
If  the sequence $(m_n)$ is
not bounded, then we obtain a contradiction as follows.  Let $x$ be a
hyperbolic periodic point of $f$, of period $p$.
For $n$ large, the map $f_n$ has a periodic orbit $x_n$ of period $p$,
and the derivatives $Df^p_n(x_n)$ tend to the derivative $Df^p(x)$.
But then $|\log\|Df_n^{m_n}\||$ tends to infinity as $n\to\infty$.
This contradicts the fact that the diffeomorphisms
$f^{m_n}_n=g_n$ and $f^{-m_n}_n=g_n^{-1}$ are both $K$-Lipschitz,
concluding the proof.
\end{proof}

\section{Conclusion}\label{s.conclusion} To complete the proof of
Theorem~\ref{t.main1}, it remains to prove Theorems~\ref{t.UD}
and \ref{t.LD}. Both of these results split in two parts.  The first
part is a local perturbation tool, which changes the derivative of $f$ in a very small
neighborhood of  a point, the neighborhood being chosen so small
that $f$ looks like a linear map on many iterates of this
neighborhood. In the second part, we perform perturbations provided by the
first part at different places in such a
way that the derivative of every (wandering or non-periodic) orbit
will be changed in the desirable way. For the (UD) property on the
wandering set, the existence of open sets disjoint from all its
iterates are very helpful, allowing us to spread the perturbation
out over time. For the (LD) property, we need to control every
non-periodic orbit. The existence of \emph{topological towers} with
very large return time,  constructed in \cite{BC}, are the main tool,
allowing us again to spread the perturbations out over a long time
interval.

\begin{thank}
We thank Andres Navas for calling our attention to the Lipschitz centralizer
and for pointing out to us Ghys's paper~\cite{ghys}.
\end{thank}

\vspace{10pt}

\noindent \textbf{Christian Bonatti (bonatti@u-bourgogne.fr)}\\
\noindent  CNRS - Institut de Math\'ematiques de Bourgogne, UMR 5584\\
\noindent  BP 47 870\\
\noindent  21078 Dijon Cedex, France\\
\vspace{10pt}

\noindent \textbf{Sylvain Crovisier (crovisie@math.univ-paris13.fr)}\\
\noindent CNRS - Laboratoire Analyse, G\'eom\'etrie et Applications, UMR 7539,\\
\noindent Institut Galil\'ee, Universit\'e Paris 13, Avenue J.-B. Cl\'ement,\\
\noindent 93430 Villetaneuse, France\\
\vspace{10pt}

\noindent \textbf{Amie Wilkinson (wilkinso@math.northwestern.edu)}\\
\noindent Department of Mathematics, Northwestern University\\
\noindent 2033 Sheridan Road \\
\noindent Evanston, IL 60208-2730,  USA

\end{document}